\documentclass{amsart}
\usepackage{graphicx}
\usepackage{amsmath}
\usepackage{amssymb}
\usepackage{oldgerm}
\usepackage{mathdots}
\usepackage{stmaryrd}
\usepackage{bm}
\usepackage[all]{xy}
\usepackage{color}

\newcommand{\Hom}{\operatorname{Hom}\nolimits}
\renewcommand{\Im}{\operatorname{Im}\nolimits}

\newcommand{\Coker}{\operatorname{Coker}\nolimits}

\newcommand{\n}{\mathfrak{n}}

\newcommand{\pd}{\operatorname{pd}\nolimits}
\newcommand{\proj}{\operatorname{proj}\nolimits}

\newcommand{\V}{\operatorname{V}\nolimits}

\newcommand{\MF}{\operatorname{\bf{MF}}\nolimits}
\newcommand{\HMF}{\operatorname{\bf{HMF}}\nolimits}

\newcommand{\MCM}{\operatorname{\bf{MCM}}\nolimits}
\newcommand{\stMCM}{\operatorname{\underline{\bf{MCM}}}\nolimits}
\newcommand{\derived}{\operatorname{\bf{D^{b}}}\nolimits}
\newcommand{\sing}{\operatorname{\bf{D_{sg}}}\nolimits}
\newcommand{\Kac}{\operatorname{\bf{K_{ac}}}\nolimits}
\newcommand{\Ktac}{\operatorname{\bf{K_{tac}}}\nolimits}
\newcommand{\thick}{\operatorname{\bf{thick}}\nolimits}
\newcommand{\comments}[1]{}

\newtheorem{theorem}{Theorem}[section]

\newtheorem{corollary}[theorem]{Corollary}

\newtheorem{lemma}[theorem]{Lemma}
\newtheorem{proposition}[theorem]{Proposition}

\theoremstyle{definition}

\theoremstyle{definition}

\newtheorem{example}[theorem]{Example}
\theoremstyle{definition}

\theoremstyle{remark}

\theoremstyle{remark}

\theoremstyle{definition}

\begin{document}

\title[Complete intersections and matrix factorizations]{Complete intersections and equivalences with categories of matrix factorizations}

\author{Petter Andreas Bergh and David A. Jorgensen}

\address{Petter Andreas Bergh \\ Institutt for matematiske fag \\
NTNU \\ N-7491 Trondheim \\ Norway} \email{bergh@math.ntnu.no}
\address{David A.\ Jorgensen \\ Department of Mathematics \\ University
of Texas at Arlington \\ Arlington \\ TX 76019 \\ USA}
\email{djorgens@uta.edu}

\subjclass[2010]{13D02, 13D09, 18E30}

\keywords{Matrix factorizations, complete intersections}

\begin{abstract}
We prove that one can realize certain triangulated subcategories of the singularity category of a complete intersection as homotopy categories of matrix factorizations. Moreover, we prove that for any commutative ring and non-zerodivisor, the homotopy category of matrix factorizations embeds into the homotopy category of totally acyclic complexes of finitely generated projective modules over the factor ring. 
\end{abstract}

\maketitle

\section{Introduction}\label{Sec:intro}

Matrix factorizations of elements in commutative rings were introduced by Eisenbud in \cite{Eisenbud}, in order to study free resolutions over the corresponding factor rings. In particular, he showed that minimal free resolutions over hypersurface rings eventually correspond to matrix factorizations, and are therefore eventually two-periodic. More precisely, let $Q$ be a regular local ring, $x$ a nonzero element, and denote the factor ring $Q/(x)$ by $R$. Eisenbud showed that if we take any finitely generated maximal Cohen-Macaulay module over $R$, without free summands, then its minimal free resolution is obtained from a matrix factorization of $x$ over $Q$.

The homotopy category of all matrix factorizations of $x$ over $Q$ forms a triangulated category in a natural way. The distinguished triangles are those that are isomorphic to the standard triangles constructed using mapping cones, as in the homotopy category of complexes over an additive category. Buchweitz remarked in \cite{Buchweitz} that in the above situation, with $Q$ regular, the homotopy category of matrix factorizations of $x$ is equivalent to the singularity category of $R$; this was proved explicitly by Orlov in \cite[Theorem 3.9]{Orlov1}. 

In this paper, we prove an analogue of the Buchweitz-Orlov result for more general complete intersections. Let $R$ be a complete intersection of codimension $c$, with $c \ge 2$. We prove that the singularity category of $R$ contains several triangulated subcategories that are equivalent to homotopy categories of matrix factorizations over complete intersections of codimension $c-1$. 

Along the way we also prove a more general embedding result analogous to \cite[Theorem 1]{Orlov2} and \cite[Example B.5]{BurkeWalker2}. It shows that for \emph{any} commutative ring and non-zerodivisor, the homotopy category of matrix factorizations embeds into the homotopy category of totally acyclic complexes of finitely generated projective modules over the factor ring.

\section{Preliminaries}\label{Sec:pre}

\subsection*{Matrix factorizations}

Let $S$ be a commutative ring and $x$ an element of $S$. A \emph{matrix factorization} $(F,G,\phi,\psi)$ of $x$ is a diagram
$$\xymatrix@C=30pt{
F \ar[r]^{\phi} & G \ar[r]^{\psi} & F}$$
in which $F$ and $G$ are finitely generated free $S$-modules, and $\phi$ and $\psi$ are $S$-homomorphisms satisfying
\begin{eqnarray*}
\psi \circ \phi & = & x \cdot 1_F \\
\phi \circ \psi & = & x \cdot 1_G.
\end{eqnarray*}
A morphism $\theta \colon (F_1,G_1,\phi_1,\psi_1) \to (F_2,G_2,\phi_2,\psi_2)$ between two matrix factorizations (of $x$) is a pair of homomorphisms $f \colon F_1 \to F_2$ and $g \colon G_1 \to G_2$ such that the diagram
$$\xymatrix@C=30pt@R=20pt{
F_1 \ar[r]^{\phi_1} \ar[d]^{f} & G_1 \ar[r]^{\psi_1}  \ar[d]^{g} & F_1  \ar[d]^{f} \\
F_2 \ar[r]^{\phi_2} & G_2 \ar[r]^{\psi_2} & F_2}$$
commutes. The category $\MF(S,x)$ of matrix factorizations and maps is additive, with the obvious notion of a zero object and direct sums.

The \emph{suspension} $\Sigma (F,G,\phi,\psi)$ of $(F,G,\phi,\psi)$ is the matrix factorization
$$\xymatrix@C=30pt{
G \ar[r]^{- \psi} & F \ar[r]^{- \phi} & G}$$
of $x$. The \emph{mapping cone} $C_{\theta}$ of the map $\theta$ above is the diagram
$$\xymatrix@C=40pt{
G_1 \oplus F_2 \ar[r]^{\left [ \begin{smallmatrix}- \psi_1 & 0 \\ g & \phi_2 \end{smallmatrix} \right ]} & F_1 \oplus G_2 \ar[r]^{\left [ \begin{smallmatrix}- \phi_1 & 0 \\ f & \psi_2 \end{smallmatrix} \right ]} & G_1 \oplus F_2 }$$ 
which is easily seen to be a matrix factorization of $x$. Note that there are natural maps 
$$\xymatrix@C=48pt{
(F_2,G_2,\phi_2,\psi_2) \ar[d]^{i_{\theta}} & F_2 \ar[r]^{\phi_2} \ar[d]^{\left [ \begin{smallmatrix}0 \\ 1_{F_2} \end{smallmatrix} \right ]} & G_2 \ar[r]^{\psi_2} \ar[d]^{\left [ \begin{smallmatrix}0 \\ 1_{G_2} \end{smallmatrix} \right ]} & F_2 \ar[d]^{\left [ \begin{smallmatrix}0 \\ 1_{F_2} \end{smallmatrix} \right ]} \\
C_{\theta} & G_1 \oplus F_2 \ar[r]^{\left [ \begin{smallmatrix}- \psi_1 & 0 \\ g & \phi_2 \end{smallmatrix} \right ]} & F_1 \oplus G_2 \ar[r]^{\left [ \begin{smallmatrix}- \phi_1 & 0 \\ f & \psi_2 \end{smallmatrix} \right ]} & G_1 \oplus F_2 }$$ 
and 
$$\xymatrix@C=45pt{
C_{\theta} \ar[d]^{\pi_{\theta}} & G_1 \oplus F_2 \ar[d]^{\left [ \begin{smallmatrix}1_{G_1} & 0 \end{smallmatrix} \right ]} \ar[r]^{\left [ \begin{smallmatrix}- \psi_1 & 0 \\ g & \phi_2 \end{smallmatrix} \right ]} & F_1 \oplus G_2 \ar[d]^{\left [ \begin{smallmatrix}1_{F_1} & 0 \end{smallmatrix} \right ]} \ar[r]^{\left [ \begin{smallmatrix}- \phi_1 & 0 \\ f & \psi_2 \end{smallmatrix} \right ]} & G_1 \oplus F_2 \ar[d]^{\left [ \begin{smallmatrix}1_{G_1} & 0 \end{smallmatrix} \right ]} \\
\Sigma (F_1,G_1,\phi_1,\psi_1) & G_1 \ar[r]^{- \psi_1} & F_1 \ar[r]^{- \phi_1} & G_1 }$$
of matrix factorizations in $\MF(S,x)$.

Two maps $\theta, \theta' \colon (F_1,G_1,\phi_1,\psi_1) \to (F_2,G_2,\phi_2,\psi_2)$ in $\MF(S,x)$, with the same source and target, are \emph{homotopic} if there are diagonal maps in the diagram
$$\xymatrix@C=50pt{
F_1 \ar[r]^{\phi_1} \ar[d]^{f}_{f'} & G_1 \ar[r]^{\psi_1}  \ar[d]^{g}_{g'} \ar[dl]_{s} & F_1  \ar[d]^{f}_{f'} \ar[dl]_{t} \\
F_2 \ar[r]^{\phi_2} & G_2 \ar[r]^{\psi_2} & F_2}$$
satisfying
\begin{eqnarray*}
f - f' & = & s \circ \phi_1 + \psi_2 \circ t \\
g - g' & = & t \circ \psi_1 + \phi_2 \circ s.
\end{eqnarray*} 
This is an equivalence relation on the abelian groups of morphisms in $\MF(S,x)$, and the equivalence class of the map $\theta$ is denoted by $[ \theta ]$. It is straightforward to show that homotopies are compatible with addition and composition of maps in $\MF(S,x)$. The \emph{homotopy category} $\HMF(S,x)$ has the same objects as $\MF(S,x)$, but the morphism sets are homotopy equivalence classes of morphisms in $\MF(S,x)$. By the above, the morphism sets in $\HMF(S,x)$ are abelian groups, hence the homotopy category is also additive with the same zero object (which is now unique only up to homotopy) and the usual direct sums.

The homotopy category $\HMF(S,x)$ admits a natural structure of a triangulated category. The suspension defined above induces an additive automorphism $\Sigma \colon \HMF(S,x) \to \HMF(S,x)$, with $\Sigma^2$ the identity automorphism. Let $\Delta$ be the collection of all triangles in $\HMF(S,x)$ isomorphic to \emph{standard triangles}, that is, triangles of the form
$$\xymatrix@C=30pt{
(F_1,G_1,\phi,\psi) \ar[r]^<<<<<<{[ \theta ]} & (F_2,G_2,\phi_2,\psi_2) \ar[r]^<<<<<{[ i_{\theta} ]} & C_{\theta}  \ar[r]^<<<<<{[ \pi_{\theta} ]} & \Sigma (F_1,G_1,\phi,\psi) }$$
Then the triple $\left ( \HMF(S,x), \Sigma, \Delta \right )$ is a triangulated category; the classical proof (cf.\ \cite[Theorem 6.7]{HolmJorgensen}) showing that the homotopy category of complexes over an additive category is triangulated carries over.

\subsection*{Complete intersections}

Let $(Q, \n, k)$ be a regular local ring and $\bm{t} = t_1, \dots, t_c$ a regular sequence contained in $\n^2$. Define $R = Q/ ( \bm{t} )$; this is a \emph{complete intersection} of codimension $c$. 

Denote the $c$-dimensional $k$-vector space $(\bm{t}) / \n (\bm{t} )$ by $V$. Every basis of $V$ lifts to a regular sequence in $Q$, in particular any sequence $\overline{x}_1, \dots, \overline{x}_t$ of linearly independent elements lifts to a regular sequence $x_1, \dots, x_t$ which can be completed to a regular sequence $x_1, \dots, x_c$ generating the ideal $(\bm{t})$. Now take a single element $\overline{x}$ in $V$, lift it to an element $x \in Q$, and consider the hypersurface $Q/(x)$. By the above, the ring $R$ is a factor of this hypersurface by a regular sequence. Following \cite{BerghJorgensen}, we define the \emph{support variety} of an $R$-module $M$ as
$$\V_R(M) \stackrel{\text{def}}{=} \{ \overline{x} \in V \mid \pd_{Q/(x)} = \infty \},$$
that is, the set of all vectors in $V$ for which $M$ has infinite projective dimension over the corresponding hypersurface. By \cite[Remark following Definition 2.1]{BerghJorgensen}, this definition is well defined; if $\overline{x} = \overline{y}$ in $V$, then $\pd_{Q/(x)} = \infty$ if and only if $\pd_{Q/(y)} = \infty$. Note that $\V_R(M)$ is \emph{not} a subspace of $V$, but it is a cone, i.e.\ if $\overline{x} \in V$, then $\alpha \overline{x} \in V$ for all $\alpha \in k$. Also, the varieties just defined are isomorphic to the cohomological support varieties defined by Avramov and Buchweitz in \cite[Theorem 2.5]{AvramovBuchweitz}. 

Our main result uses support varieties to establish equivalences between certain homotopy categories of matrix factorizations and triangulated subcategories of the singularity category $\sing (R)$ of $R$. The latter category is the Verdier quotient $\derived(R)/ \thick(R)$, where $\derived(R)$ is the bounded derived category of $R$-modules, while $\thick(R)$ is the thick subcategory of $\derived(R)$ generated by $R$. In other words, $\thick(R)$ consists of the perfect complexes, i.e.\ the finite complexes whose modules are finitely generated free $R$-modules. Every complete intersection is a Gorenstein ring, and therefore the singularity category of $R$ can be interpreted both in terms of maximal Cohen-Macaulay modules and in terms of acyclic complexes of free modules. Namely, consider the category $\MCM(R)$ of maximal Cohen-Macaulay $R$-modules. This is a Frobenius category, that is, an exact category (in the sense of Quillen) with enough projectives, and the projective and injective objects coincide. Following \cite[Chapter I.2]{Happel}, we form the stable category $\stMCM (R)$, which admits a natural structure of a triangulated category with the cosyzygy functor $\Omega_R^{-1} \colon \stMCM (R) \to \stMCM (R)$ as suspension. By \cite[Theorem 4.4.1]{Buchweitz}, the map $\stMCM (R) \to \sing (R)$ sending a module to its stalk complex is an equivalence of triangulated categories. Now let $\Kac (\proj R)$ be the homotopy category of acyclic unbounded complexes of finitely generated free $R$-modules. With the usual shifting of complexes as suspension, this category admits a triangulated structure (for any ring) analogous to the one for the homotopy category of matrix factorizations; the distinguished triangles are those isomorphic to standard triangles defined using mapping cones. Again by \cite[Theorem 4.4.1]{Buchweitz}, the map $\Kac(\proj R) \to \stMCM (R)$ sending a complex to the image of its zeroth differential is an equivalence of triangulated categories.

The notion of support varieties for $R$-modules extends to the singularity category and the homotopy category of acyclic complexes of free modules. Given an object $M$ in $\sing (R)$, there is, by the above, a maximal Cohen-Macaulay $R$-module $X_M$ whose stalk complex is isomorphic in $\sing (R)$ to $M$. On the other hand, if $M$ is an object in $\Kac(\proj R)$, then the image $X_M$ of its zeroth differential is also a maximal Cohen-Macaulay module. In either case, we define the support variety $\V_R(M)$ of $M$ to be the variety of the module $X_M$, i.e.\
$$\V_R(M) \stackrel{\text{def}}{=} \V_R(X_M).$$

We end this section with two lemmas we need in order to obtain triangulated subcategories of $\sing (R)$ and $\Kac(\proj R)$ defined in terms of these support varieties. First, consider a short exact sequence
$$\xymatrix@C=30pt{
0 \ar[r] & M_1 \ar[r] & M_2 \ar[r] & M_3 \ar[r] & 0 }$$
of $R$-modules. Given any $\overline{x}$ in $V = (\bm{t}) / \n (\bm{t} )$, the sequence is also exact when viewed as a sequence of $Q/(x)$-modules. If one of the three modules has infinite projective dimension over $Q/(x)$, then the same must be true for at least one of the other two modules. Consequently, support varieties are ``subadditive'' on short exact sequences, in the sense that
$$\V_R(M_u) \subseteq V_R(M_v) \cup V_R(M_w)$$
whenever $\{ u,v,w \} = \{ 1,2,3 \}$. Moreover, if one of the modules has finite projective dimension over $R$, then the varieties of the other two modules must be equal. In particular, the variety of a module $M$ coincides with that of its syzygy $\Omega_R^1(M)$. This follows from the fact that for every $\overline{x} \in V$, the $Q/(x)$-module $R$ has finite projective dimension, being a factor of  $Q/(x)$ by a regular sequence. The first lemma shows that the analogues of these two properties hold for distinguished triangles in the singularity category and the homotopy category of acyclic complexes.

\begin{lemma}\label{Lem:subadditive}
For every distinguished triangle
$$\xymatrix@C=30pt{
M_1 \ar[r] & M_2 \ar[r] & M_3 \ar[r] & \Sigma M_1}$$
in both $\sing (R)$ and $\Kac(\proj R)$, the inclusion $\V_R(M_u) \subseteq V_R(M_v) \cup V_R(M_w)$ holds whenever $\{ u,v,w \} = \{ 1,2,3 \}$. In particular, the equality $\V_R(M) = \V_R( \Sigma M)$ holds for all objects $M$.
\end{lemma}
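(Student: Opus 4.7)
The plan is to reduce the statement to subadditivity for short exact sequences of maximal Cohen-Macaulay modules, which was already noted in the paragraph preceding the lemma. The two triangulated equivalences $\Kac(\proj R) \xrightarrow{\sim} \stMCM(R) \xrightarrow{\sim} \sing(R)$ recalled in the text send an object to a MCM representative, and by definition $\V_R$ factors through that representative, so it suffices to prove the claim in $\stMCM(R)$.

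Now in $\stMCM(R)$ the triangulated structure is the one of Happel for a Frobenius category, and a standard result says that every distinguished triangle is isomorphic to one coming from a short exact sequence of MCM modules. Thus, given a distinguished triangle $M_1\to M_2\to M_3\to \Sigma M_1$ in $\stMCM(R)$, I would choose MCM representatives and a genuine short exact sequence
\[
\xymatrix@C=25pt{0\ar[r] & M_1\ar[r] & M_2\ar[r] & M_3\ar[r] & 0}
\]
of MCM modules realizing it. Since $\V_R$ is insensitive to free summands (adding a free $R$-summand to an MCM module does not change its projective dimension over any hypersurface $Q/(x)$, because $R$ has finite projective dimension over $Q/(x)$), the varieties of these representatives equal the varieties of the original objects in the triangle.

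At this point the subadditivity $\V_R(M_u)\subseteq \V_R(M_v)\cup\V_R(M_w)$ for $\{u,v,w\}=\{1,2,3\}$ is exactly the observation made just before the lemma: for any $\overline{x}\in V$ the short exact sequence remains exact over $Q/(x)$, and if one module has infinite projective dimension over $Q/(x)$, then at least one of the other two must as well. This gives the first statement in both categories.

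For the ``in particular'' part, I would apply the first part to the distinguished triangle $M\to 0\to \Sigma M\to \Sigma M$ (isomorphic to a standard triangle via the zero morphism), obtaining $\V_R(M)\subseteq \V_R(0)\cup\V_R(\Sigma M)=\V_R(\Sigma M)$ and, by the symmetric inclusion, $\V_R(\Sigma M)\subseteq \V_R(M)$. The main obstacle is really only the first reduction step, namely making sure that the translation from distinguished triangles in $\Kac(\proj R)$ or $\sing(R)$ to honest short exact sequences of MCM modules is compatible with the definition of $\V_R$; this is a routine unpacking of the Buchweitz equivalence $\Kac(\proj R)\xrightarrow{\sim}\stMCM(R)$, under which taking the image of the zeroth differential agrees with picking an MCM representative, and of the Happel construction of triangles in the stable category of a Frobenius category.
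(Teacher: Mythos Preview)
Your argument is correct, but it takes a different route from the paper's. The paper stays inside $\Kac(\proj R)$ and argues directly on a standard triangle $M\xrightarrow{[\theta]}N\to C_\theta\to\Sigma M$: the mapping cone yields a genuine short exact sequence $0\to N\to C_\theta\to\Sigma M\to 0$ of complexes, and passing to images of the zeroth differentials gives a short exact sequence of $R$-modules to which the module-level subadditivity applies (after noting $\V_R(\Im d_0^{\Sigma M})=\V_R(\Im d_0^{M})$). Your approach instead transports the problem through the Buchweitz equivalence to $\stMCM(R)$ and invokes Happel's description of triangles in a Frobenius stable category to produce a short exact sequence of MCM modules, then uses that $\V_R$ ignores free summands to handle the stable-isomorphism ambiguity. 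Both reductions land on the same module-theoretic subadditivity, and both handle the ``in particular'' clause via the triangle $M\to 0\to\Sigma M\to\Sigma M$. The paper's argument is more self-contained (no black-box appeal to the Frobenius machinery), while yours is cleaner conceptually and makes transparent why the two ambient categories can be treated uniformly.
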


\begin{proof}
\sloppy It suffices to prove this for the homotopy category $\Kac(\proj R)$. We only need to consider a standard triangle
$$\xymatrix@C=30pt{
M \ar[r]^{[\theta]} & N \ar[r]^{[i_{\theta}]} & C_{\theta} \ar[r]^{[\pi_{\theta}]} & \Sigma M }$$
where $C_{\theta}$ is the mapping cone of $\theta$; here we use the same notation as for the standard triangles in the homotopy category of matrix factorizations. From the triangle we obtain a short exact sequence
$$\xymatrix@C=30pt{
0 \ar[r] & N \ar[r]^{i_{\theta}} & C_{\theta} \ar[r]^{\pi_{\theta}} & \Sigma M \ar[r] & 0 }$$
of complexes of $R$-modules, and in turn a short exact sequence
$$\xymatrix@C=30pt{
0 \ar[r] & \Im d^N_0 \ar[r]^{i_{\theta}} & \Im d^{C_{\theta}}_0 \ar[r]^{\pi_{\theta}} & \Im d^{\Sigma M}_0 \ar[r] & 0 }$$
of images of the zeroth differentials. The $R$-module $\Im d^{\Sigma M}_0$ is the end term of a short exact sequence in which the other end term is $\Im d^M_0$, and where the middle term is a free $R$-module. Thus $\V_R( \Im d^{\Sigma M}_0 ) = \V_R( \Im d^{M}_0 )$, and so the ``subadditivity'' of support varieties for the short exact sequence of images of the zeroth differentials carries over to the standard triangle we started with. This shows that a distinguished triangle 
$$\xymatrix@C=30pt{
M_1 \ar[r] & M_2 \ar[r] & M_3 \ar[r] & \Sigma M_1}$$
in $\Kac (\proj R)$ gives rise to an inclusion $\V_R(M_u) \subseteq V_R(M_v) \cup V_R(M_w)$ whenever $\{ u,v,w \} = \{ 1,2,3 \}$.

Note that we proved along the way that support varieties are stable under the suspension functor, since  $\V_R( \Im d^{\Sigma M}_0 ) = \V_R( \Im d^{M}_0 )$. This also follows from the inclusions we obtain for distinguished triangles. Namely, for every object $M \in \Kac (\proj R)$, there is a distinguished triangle
$$\xymatrix@C=30pt{
M \ar[r] & 0 \ar[r] & \Sigma M \ar[r]^{-1_{\Sigma M}} & \Sigma M }$$
and so since the support variety of the zero object is trivial we see that $\V_R(M) = \V_R( \Sigma M)$.
\end{proof}

Now we define, in terms of support varieties, the triangulated subcategories of $\sing(R)$ and $\Kac(\proj R)$ we shall be concerned with. Namely, given a subset $W \subset V$, consider the subcategories
$$\{ M \in \sing(R) \mid V_R(M) \subseteq W \}$$
$$\{ M \in \Kac(\proj R) \mid V_R(M) \subseteq W \}.$$
The following lemma shows that these are indeed triangulated subcategories of $\sing(R)$ and $\Kac(\proj R)$, respectively; in fact, they are \emph{thick} subcategories, that is, triangulated subcategories closed under direct summands.

\begin{lemma}\label{Lem:thick}
For every subset $W \subset V$, the subcategories
$$\{ M \in \sing(R) \mid V_R(M) \subseteq W \}$$
$$\{ M \in \Kac(\proj R) \mid V_R(M) \subseteq W \}$$
of $\sing(R)$ and $\Kac(\proj R)$ are thick.
\end{lemma}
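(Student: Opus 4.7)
The plan is to verify, for each of the two subcategories, the three defining conditions of a thick triangulated subcategory: closure under $\Sigma^{\pm 1}$, closure under completion of distinguished triangles (if two of the three vertices lie in the subcategory, so does the third), and closure under direct summands. Closure under $\Sigma^{\pm 1}$ is immediate from the equality $\V_R(M) = \V_R(\Sigma M)$ established in Lemma~\ref{Lem:subadditive}, and closure under triangle completion follows at once from the subadditivity inclusion $\V_R(M_u) \subseteq \V_R(M_v) \cup \V_R(M_w)$ provided by the same lemma.

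The heart of the argument is closure under direct summands. I would reduce this to the claim that
$$\V_R(N_1 \oplus N_2) = \V_R(N_1) \cup \V_R(N_2)$$
for every pair of objects $N_1,N_2$; granted this, the hypothesis $\V_R(M_1 \oplus M_2) \subseteq W$ forces $\V_R(M_i) \subseteq W$ for $i=1,2$. To prove the equality, I would pass to maximal Cohen--Macaulay representatives: the MCM representative of $N_1 \oplus N_2$ coincides with $X_{N_1} \oplus X_{N_2}$ up to a free summand, which does not affect the support variety. For any $\overline{x} \in V$, one then has $\pd_{Q/(x)}(X_{N_1} \oplus X_{N_2}) = \max \{ \pd_{Q/(x)} X_{N_1}, \pd_{Q/(x)} X_{N_2} \}$, so the direct sum has infinite projective dimension over $Q/(x)$ precisely when at least one summand does.

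The main obstacle is this last identity. The split triangle $N_1 \to N_1 \oplus N_2 \to N_2 \to \Sigma N_1$ together with Lemma~\ref{Lem:subadditive} yields only $\V_R(N_1) \subseteq \V_R(N_1 \oplus N_2) \cup \V_R(N_2)$, which is strictly weaker than the needed $\V_R(N_1) \subseteq \V_R(N_1 \oplus N_2)$. Because $W$ is merely a subset of $V$, with no closure assumption imposed on it, there is no purely formal triangulated shortcut here, and the identification must be made by appealing to the concrete definition of $\V_R$ in terms of projective dimensions over the hypersurface rings $Q/(x)$.
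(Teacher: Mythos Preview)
Your proposal is correct and follows essentially the same route as the paper's proof: closure under $\Sigma$ and under triangles come directly from Lemma~\ref{Lem:subadditive}, and closure under direct summands is reduced to the identity $\V_R(N_1 \oplus N_2) = \V_R(N_1) \cup \V_R(N_2)$. The paper simply asserts this last equality and moves on, whereas you spell out why it holds (via the MCM representatives and the formula $\pd_{Q/(x)}(X_{N_1}\oplus X_{N_2})=\max\{\pd_{Q/(x)}X_{N_1},\pd_{Q/(x)}X_{N_2}\}$) and correctly observe that Lemma~\ref{Lem:subadditive} alone is not strong enough to deliver it.
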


\begin{proof}
Consider either of the two subcategories. It is obviously closed under isomorphisms, since isomorphic objects have the same support variety. Moreover, since $\V_R(M) = \V_R( \Sigma M )$ by Lemma \ref{Lem:subadditive}, we see that $M$ belongs to the subcategory if and only if its suspension $\Sigma M$ does. Now suppose that
$$\xymatrix@C=30pt{
M_1 \ar[r] & M_2 \ar[r] & M_3 \ar[r] & \Sigma M_1}$$
is a distinguished triangle, and that two of the objects, say $M_u$ and $M_v$, belong to the subcategory. Using Lemma \ref{Lem:subadditive} again, we obtain the inclusion 
$$\V_R(M_w) \subseteq V_R(M_u) \cup V_R(M_v) \subseteq W,$$
hence the object $M_w$ also belongs to the subcategory. This shows that the subcategory is triangulated.

Finally, let $M$ be an object in the subcategory, and $N$ a direct summand of $M$. Then there exists an object $N'$ such that $M$ is isomorphic to $N \oplus N'$, and so $\V_R(M) = \V_R(N) \cup \V_R(N')$. This gives $\V_R(N) \subseteq \V_R(M) \subseteq W$, hence $N$ belongs to the subcategory. The subcategory is therefore thick.
\end{proof}

\section{equivalences}\label{Sec:main}

The main result in this paper shows that we can realize certain triangulated subcategories of the singularity category of a complete intersection as homotopy categories of matrix factorizations. The triangulated subcategories in question arise from support varieties, as in Lemma \ref{Lem:thick}. However, first we show that for \emph{any} commutative ring and non-zerodivisor, the homotopy category of matrix factorizations embeds into the homotopy category of totally acyclic complexes of finitely generated projective modules over the factor ring. This result is analogous to \cite[Theorem 1]{Orlov2} and \cite[Example B.5]{BurkeWalker2}, and will be established through a series of results. 

Recall first that over a commutative ring $R$, a complex $M$ of finitely generated projective $R$-modules is \emph{totally acyclic} if both $M$ and the dualized complex $\Hom_R(M,R)$ are acyclic. These complexes form a thick subcategory $\Ktac (\proj R)$ of $\Kac (\proj R)$. Namely, a standard triangle in $\Kac (\proj R)$ gives rise to a (split) short exact sequence of dualized complexes, and in turn a long exact sequence in homology. Consequently, if two of the dualized complexes are acyclic, then so is the third one. If the ring $R$ is a local Gorenstein ring (for example a complete intersection), then acyclic complexes are automatically totally acyclic; since $R$ has finite injective dimension, a dualized acyclic complex of free modules cannot have nonzero homology. Thus $\Ktac (\proj R) = \Kac (\proj R)$ for local Gorenstein rings, but in general an acyclic complex need not be totally acyclic; cf.\ \cite{JorgensenSega}.

\begin{lemma}\label{Lem:functor}
Let $S$ be a commutative ring, $x \in S$ a non-zerodivisor, and denote by $R$ the factor ring $S/(x)$. To a matrix factorization $(F,G, \phi, \psi)$ in $\MF(S,x)$, assign the sequence
$$\xymatrix@C=30pt{
\cdots \ar[r] & F/xF \ar[r]^{\overline{\phi}} & G/xG \ar[r]^{\overline{\psi}} & F/xF \ar[r]^{\overline{\phi}} & G/xG \ar[r] & \cdots}$$
of finitely generated free $R$-modules. This assignment induces a triangle functor
$$T \colon \HMF(S,x) \to \Ktac(\proj R).$$
\end{lemma}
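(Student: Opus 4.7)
The plan is to verify, in order: that the assigned sequence is a chain complex of finitely generated free $R$-modules, that it is acyclic, that it is totally acyclic, that the assignment extends to morphisms in a way that respects homotopy, and finally that it commutes with suspension and sends mapping cones to mapping cones.

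First I would check well-definedness and acyclicity. Each $F/xF$ is a finitely generated free $R$-module since $F$ is free over $S$ and $x$ is a non-zerodivisor, and $\overline{\phi}, \overline{\psi}$ are $R$-linear reductions of $S$-linear maps. The relations $\psi\phi = x\cdot 1_F$ and $\phi\psi = x\cdot 1_G$ reduce modulo $x$ to $\overline{\psi}\,\overline{\phi} = 0$ and $\overline{\phi}\,\overline{\psi} = 0$, so we indeed get a $2$-periodic complex. For acyclicity, suppose $f \in F$ represents a class in $\Ker \overline{\phi}$, meaning $\phi(f) = xg$ for some $g \in G$. Applying $\psi$ yields $xf = \psi\phi(f) = x\psi(g)$, and since $x$ is a non-zerodivisor on the free module $F$, we conclude $f = \psi(g)$, so $\overline{f} \in \Im \overline{\psi}$. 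The symmetric argument at the other spot gives acyclicity at every degree.

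Next, to upgrade acyclicity to total acyclicity, I would observe that the $S$-dual $(G^{\ast}, F^{\ast}, \psi^{\ast}, \phi^{\ast})$ is again a matrix factorization of $x$, since dualizing preserves the identities $\psi\phi = x\cdot 1_F$ and $\phi\psi = x\cdot 1_G$. Because $F$ and $G$ are free, the canonical identification $\Hom_R(F/xF, R) \cong F^{\ast}/xF^{\ast}$ shows that $\Hom_R\bigl(T(F,G,\phi,\psi), R\bigr)$ is (up to reindexing) the sequence associated to this dual matrix factorization. By the first step, that sequence is acyclic, so $T(F,G,\phi,\psi)$ lies in $\Ktac(\proj R)$.

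Then I would handle the morphism level. A morphism $\theta = (f,g)$ in $\MF(S,x)$ consists of an $S$-linear commutative diagram, which reduces modulo $x$ to a commutative diagram of $R$-modules, i.e.\ a chain map $T(\theta)$. If $\theta$ and $\theta'$ are homotopic via $(s,t)$ satisfying $f - f' = s\phi_1 + \psi_2 t$ and $g - g' = t\psi_1 + \phi_2 s$, then reducing mod $x$ the maps $\overline{s}$ and $\overline{t}$ assemble into a chain homotopy between $T(\theta)$ and $T(\theta')$ in $\Kac(\proj R)$. Hence $T$ descends to a well-defined additive functor on $\HMF(S,x)$.

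Finally, to see that $T$ is triangulated I would compare the constructions directly. The suspension $\Sigma(F,G,\phi,\psi) = (G,F,-\psi,-\phi)$ reduces to the complex with $G/xG, F/xF$ swapped and differentials negated, which matches the shift convention on $\Ktac(\proj R)$; thus $T\circ\Sigma \cong \Sigma\circ T$ naturally. The formula for $C_{\theta}$ in $\MF(S,x)$ given in Section~\ref{Sec:pre} reduces entry-by-entry modulo $x$ to the standard mapping cone of $T(\theta)$ in $\Kac(\proj R)$, and the natural maps $i_{\theta}$, $\pi_{\theta}$ reduce to the canonical inclusion and projection. Consequently $T$ sends standard triangles to standard triangles, and hence distinguished triangles to distinguished triangles. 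The step where I expect the real content to sit is the acyclicity argument of the second paragraph, as this is the only place where the non-zerodivisor hypothesis on $x$ is essential; everything else is a straightforward verification that the matrix-factorization formulas for morphisms, homotopies, suspension, and cones are compatible with reduction modulo $x$.
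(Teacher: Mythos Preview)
Your proposal is correct and follows essentially the same approach as the paper: acyclicity via the non-zerodivisor hypothesis, total acyclicity by recognizing the $R$-dual complex as the one associated to the dual matrix factorization, and the triangle-functor structure by direct comparison of the mapping-cone and suspension formulas. The only cosmetic differences are that the paper phrases the duality step in terms of matrix transposes after choosing bases (where you use the $S$-dual directly) and that you are more explicit about the acyclicity and triangle-functor verifications than the paper, which leaves those as ``easily seen'' and ``evident''.
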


\begin{proof}
Reducing the matrix factorization $(F,G,\phi,\psi)$ modulo $x$ gives a sequence
$$\xymatrix@C=30pt{
F/xF \ar[r]^{\overline{\phi}} & G/xG \ar[r]^{\overline{\psi}} & F/xF }$$
which is easily seen to be exact; this uses the equalities $\psi \circ \phi = x \cdot 1_F$ and $\phi \circ \psi = x \cdot 1_G$, together with the fact that $x$ is a non-zerodivisor. Thus we obtain an acyclic complex
$$\xymatrix@C=30pt{
M: \cdots \ar[r] & F/xF \ar[r]^{\overline{\phi}} & G/xG \ar[r]^{\overline{\psi}} & F/xF \ar[r]^{\overline{\phi}} & G/xG \ar[r] & \cdots}$$
of finitely generated free $R$-modules. We must show that the complex $\Hom_R(M,R)$ is also acyclic.  

Fix bases for the free $S$-modules $F$ and $G$, and view the maps $\phi$ and $\psi$ as matrices with elements in $S$. Now dualize the original matrix factorization  $(F,G,\phi,\psi)$ and obtain a new matrix factorization
$$\xymatrix@C=30pt{
\Hom_S(F,S) & \Hom_S(G,S) \ar[l]_{\phi^*} & \Hom_S(F,S) \ar[l]_{\psi^*}}$$
in $\MF(S,x)$. Using the canonical isomorphism $\Hom_S(L,S) \simeq L$ for a free $S$-module $L$, one checks easily that this dualized matrix factorization is isomorphic to the matrix factorization
$$\xymatrix@C=30pt{
F & G \ar[l]_{\phi^T} & F \ar[l]_{\psi^T}}$$
in $\MF(S,x)$, where the transposes of the original matrices are the new maps. By the above, reducing modulo $x$ gives an acyclic complex
$$\xymatrix@C=30pt{
N: \cdots & F/xF \ar[l] & G/xG \ar[l]_{\overline{\phi^T}} & F/xF \ar[l]_{\overline{\psi^T}} & G/xG \ar[l]_{\overline{\phi^T}} & \cdots \ar[l] }$$
of free $R$-modules. 

Now consider the complex $M$. The maps are matrices with entries in $R$, and so the above argument shows that the complex 
$$\xymatrix@C=30pt{
\cdots & F/xF \ar[l] & G/xG \ar[l]_{(\overline{\phi})^T} & F/xF \ar[l]_{(\overline{\psi})^T} & G/xG \ar[l]_{(\overline{\phi})^T} & \cdots \ar[l] }$$
is isomorphic to the dualized complex $\Hom_R(M,R)$. But $( \overline{\rho} )^T = \overline{ \rho^T}$ for any matrix $\rho$ over $S$, hence $\Hom_R(M,R)$ is isomorphic to $N$. Since $N$ is acyclic, so is $\Hom_R(M,R)$, and this shows that $M$ is totally acyclic.

 Reducing a morphism of matrix factorizations in $\MF(S,x)$ modulo $x$ gives a morphism of totally acyclic complexes of free $R$-modules. Moreover, by reducing a homotopy between two morphisms in $\MF(S,x)$, we obtain a homotopy between the two morphisms of complexes. Thus $T$ is a functor from $\HMF(S,x)$ to $\Ktac( \proj R)$. From the similarity between the constructions of the standard triangles in these two categories, it is evident that $T$ is a triangle functor.
\end{proof}

First we show that $T$ is faithful. 

\begin{proposition}\label{Prop:faithful}
The triangle functor $T$ in \emph{Lemma \ref{Lem:functor}} is faithful.
\end{proposition}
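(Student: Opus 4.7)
The plan is to take any morphism $\theta = (f,g)$ in $\MF(S,x)$ with $T([\theta])=0$ in $\Ktac(\proj R)$, reduce to the case where both components of $\theta$ are divisible by $x$, and then produce an explicit null-homotopy that uses crucially the fact that $x$ is a non-zerodivisor.

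First I would unpack the hypothesis. The assumption $T([\theta]) = 0$ gives $R$-linear maps $\bar s \colon G_1/xG_1 \to F_2/xF_2$ and $\bar t \colon F_1/xF_1 \to G_2/xG_2$ with $\bar f = \bar s \bar\phi_1 + \bar\psi_2 \bar t$ and $\bar g = \bar t\bar\psi_1 + \bar\phi_2 \bar s$. Since $F_1$ and $G_1$ are free over $S$, we may lift $\bar s$ and $\bar t$ to $S$-linear maps $s, t$. A direct check (using $\psi_i\phi_i = x = \phi_i\psi_i$) shows that $\theta - (s\phi_1 + \psi_2 t,\, t\psi_1 + \phi_2 s)$ is still a morphism of matrix factorizations; it is homotopic to $\theta$ by construction, and reduces to zero modulo $x$. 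So we may assume from the outset that $f = xh$ and $g = xh'$ for some $S$-linear maps $h \colon F_1 \to F_2$ and $h' \colon G_1 \to G_2$.

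Next I would exploit the morphism axiom. The identity $\phi_2 \circ xh = xh' \circ \phi_1$, combined with $x$ being a non-zerodivisor in $S$, yields the key identity $\phi_2 h = h'\phi_1$. This is the crucial observation: it transfers information between the two components of $\theta$ and is precisely what allows a homotopy to be built from a single ingredient.

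Finally I would propose the explicit homotopy $(s_0, t_0) := (0,\, \phi_2 h)$ and verify
\begin{align*}
s_0 \phi_1 + \psi_2 t_0 &= \psi_2 \phi_2 h = xh = f, \\
t_0 \psi_1 + \phi_2 s_0 &= \phi_2 h \psi_1 = h' \phi_1 \psi_1 = xh' = g,
\end{align*}
using $\psi_2\phi_2 = x$, the key identity above, and $\phi_1\psi_1 = x$. This shows $[\theta] = 0$ in $\HMF(S,x)$, establishing faithfulness. The main obstacle I expect is spotting the slick homotopy $(0,\phi_2 h)$; once guessed, everything else is routine, and the only genuine use of hypothesis beyond the matrix factorization relations is the cancellation of $x$ that yields $\phi_2 h = h'\phi_1$.
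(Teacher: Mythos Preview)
There is a genuine gap in your reduction step. A nullhomotopy of $T(\theta)$ in $\Ktac(\proj R)$ is a family of maps indexed by \emph{all} of $\mathbb{Z}$, and there is no reason it should be $2$-periodic even though the complexes and the chain map are. Looking at three consecutive degrees, what you actually get is
\[
\bar f = \bar s_2\,\bar\phi_1 + \bar\psi_2\,\bar t,
\qquad
\bar g = \bar t\,\bar\psi_1 + \bar\phi_2\,\bar s_1,
\]
with $\bar s_1$ and $\bar s_2$ possibly different; the paper says this explicitly. Your subtraction uses a single lift $s$ in both places. If you take $s=s_2$, the new $f$ is indeed divisible by $x$, but the new $g$ equals $\phi_2(s_1-s_2)$ plus something divisible by $x$, which need not be in $xG_2$. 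So you never reach the situation $f=xh$, $g=xh'$, and the key identity $\phi_2 h = h'\phi_1$ is not available.

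The paper handles exactly this asymmetry: it lifts three maps $s_1,t,s_2$, writes $f-s_2\phi_1-\psi_2 t = xp$ and $g-t\psi_1-\phi_2 s_1 = xq$, and then corrects $t$ to $t'=t+\phi_2 p$; a short computation using both $p$ and $q$ shows $(s_2,t')$ is a nullhomotopy in $\MF(S,x)$. Your approach can also be rescued more cheaply: after subtracting the null-homotopic morphism built from $(s_2,t)$ you do have $f=xh$, and rather than assuming anything about $g$ you can use the morphism relation $\psi_2 g = f\psi_1 = \psi_2\phi_2 h\psi_1$ together with the injectivity of $\psi_2$ (which follows from $\phi_2\psi_2 = x\cdot 1$ and $x$ a non-zerodivisor) to conclude $g=\phi_2 h\psi_1$ directly. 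Then your proposed homotopy $(0,\phi_2 h)$ works as written.
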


\begin{proof}
Let $\theta$ be a morphism
$$\xymatrix@C=30pt@R=20pt{
F_1 \ar[r]^{\phi_1} \ar[d]^{f} & G_1 \ar[r]^{\psi_1}  \ar[d]^{g} & F_1  \ar[d]^{f} \\
F_2 \ar[r]^{\phi_2} & G_2 \ar[r]^{\psi_2} & F_2}$$
in $\MF(S,x)$, and suppose that $T$ maps its equivalence class $[\theta]$ in $\HMF(S,x)$ to zero in $\Ktac( \proj R)$. Thus when reducing $\theta$ modulo $x$, the resulting morphism of totally acyclic complexes is nullhomotopic over $R$. Consider a section
$$\xymatrix@C=30pt{
F_1/xF_1 \ar[r]^{\overline{\phi}_1} \ar[d]^{\overline{f}} & G_1/xG_1 \ar[r]^{\overline{\psi}_1} \ar[d]^{\overline{g}} \ar[dl]_{\overline{s}_1} & F_1/xF_1 \ar[r]^{\overline{\phi}_1} \ar[d]^{\overline{f}} \ar[dl]_{\overline{t}} & G_1/xG_1 \ar[d]^{\overline{g}} \ar[dl]_{\overline{s}_2} \\
F_2/xF_2 \ar[r]^{\overline{\phi}_2} & G_2/xG_2 \ar[r]^{\overline{\psi}_2} & F_2/xF_2 \ar[r]^{\overline{\phi}_2} & G_2/xG_2 }$$
of such a nullhomotopy (the homotopy is not necessarily periodic, that is, $\overline{s}_1$ may not equal $\overline{s}_2$), and choose liftings of the three diagonal maps to $S$-homomorphisms
$$s_1 \colon G_1 \to F_2, \hspace{5mm} t \colon F_1 \to G_2, \hspace{5mm} s_2 \colon G_1 \to F_2.$$
For every $a \in F_1$, the homotopy implies the existence of an element $b_a \in F_2$ such that
$$f(a) - s_2 \circ \phi_1 (a) - \psi_2 \circ t(a) = x \cdot b_a,$$
and this element is unique since $x$ is a non-zerodivisor. Similarly, for every $u \in G_1$ there exists a unique element $v_u \in G_2$ with
$$g (u) - t \circ \psi_1 (u) - \phi_2 \circ s_1 (u) = x \cdot v_u.$$
The maps
\begin{eqnarray*}
p \colon F_1 \to F_2, \hspace{5mm} a \mapsto b_a \\
q \colon G_1 \to G_2, \hspace{5mm} u \mapsto v_u
\end{eqnarray*}
are therefore well defined $S$-homomorphisms. Now modify $t$ to a new map $t' \colon F_1 \to G_2$ defined by 
$$t' = t + \phi_2 \circ p.$$
We shall show that $(s_2,t')$ is a nullhomotopy on $\theta$:
$$\xymatrix@C=40pt{
F_1 \ar[r]^{\phi_1} \ar[d]^{f} & G_1 \ar[r]^{\psi_1} \ar[dl]_{s_2} \ar[d]^{g} & F_1  \ar[d]^{f} \ar[dl]_{t'} \\
F_2 \ar[r]^{\phi_2} & G_2 \ar[r]^{\psi_2} & F_2}$$

The definition of $t'$ gives
\begin{eqnarray*}
f - s_2 \circ \phi_1 - \psi_2 \circ t' & = & f - s_2 \circ \phi_1 - \psi_2 \circ \left ( t + \phi_2 \circ p \right ) \\
& = &  f - s_2 \circ \phi_1 - \psi_2 \circ t - x \cdot p \\
& = & 0.
\end{eqnarray*}
Next, consider the equality $f - s_2 \circ \phi_1 - \psi_2 \circ t = x \cdot p$ from above. Composing with $\psi_1$ gives
\begin{eqnarray*}
x \cdot p \circ \psi_1 & = & f \circ \psi_1 - s_2 \circ \phi_1 \circ \psi_1 - \psi_2 \circ t \circ \psi_1 \\
& = & \psi_2 \circ g - x \cdot s_2 - \psi_2 \circ t \circ \psi_1 \\
& = & \psi_2 \circ \left ( g- t \circ \psi_1 \right ) - x \cdot s_2 \\
& = & \psi_2 \circ \left ( \phi_2 \circ s_1 + x \cdot q \right ) - x \cdot s_2 \\
& = & x \cdot \left ( s_1-s_2 + \psi_2 \circ q \right ),
\end{eqnarray*}
where we have also used the equality $g - t \circ \psi_1 - \phi_2 \circ s_1 = x \cdot q$. Since $x$ is a non-zerodivisor, we see that
$$p \circ \psi_1 = s_1-s_2+ \psi_2 \circ q.$$
This, in turn, gives
\begin{eqnarray*}
g - t' \circ \psi_1 - \phi_2 \circ s_2 & = & g - \left ( t + \phi_2 \circ p \right ) \circ \psi_1 - \phi_2 \circ \left ( s_1 + \psi_2 \circ q - p \circ \psi_1 \right ) \\
& = & g - t \circ \psi_1 - \phi_2 \circ s_1 - x \cdot q \\ 
& = & 0.
\end{eqnarray*}
We have proved that $(s_2,t')$ is a nullhomotopy on $\theta$, hence $[ \theta ]$=0 in $\HMF(S,x)$. This shows that the functor $T$ is faithful.
\end{proof}

Next, we show that $T$ is full. In the proof, note the similarities with the arguments in the proof of Proposition \ref{Prop:faithful}.

\begin{proposition}\label{Prop:full}
The triangle functor $T$ in \emph{Lemma \ref{Lem:functor}} is full.
\end{proposition}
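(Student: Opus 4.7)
The plan is to mirror the structure of the proof of Proposition~\ref{Prop:faithful}: take the given data in the target category $\Ktac(\proj R)$, lift its components modulo $x$ to $S$-homomorphisms, and exploit the matrix factorization identities together with the non-zerodivisor property of $x$ to produce a strict morphism of matrix factorizations. The new twist for fullness is that one must then verify that the image of this morphism in $\Ktac(\proj R)$ is homotopic to the original chain map.

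First I would start with a chain map $\overline{h} \colon T(M_1) \to T(M_2)$ representing the given morphism, and focus on two consecutive components $\overline{h}_0 \colon F_1/xF_1 \to F_2/xF_2$ and $\overline{h}_1 \colon G_1/xG_1 \to G_2/xG_2$, which satisfy $\overline{\phi}_2 \overline{h}_0 = \overline{h}_1 \overline{\phi}_1$ and $\overline{\psi}_2 \overline{h}_1 = \overline{h}_0 \overline{\psi}_1$. Lift them to $S$-homomorphisms $f \colon F_1 \to F_2$ and $g \colon G_1 \to G_2$. Because $x$ is a non-zerodivisor, unique $S$-homomorphisms $q \colon F_1 \to G_2$ and $p \colon G_1 \to F_2$ are determined by $g\phi_1 - \phi_2 f = xq$ and $f\psi_1 - \psi_2 g = xp$. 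A short manipulation, using $\phi_i \psi_i = x \cdot 1_{G_i}$ and $\psi_i \phi_i = x \cdot 1_{F_i}$ along with the uniqueness provided by the non-zerodivisor property, then yields the compatibility relations $q\psi_1 = -\phi_2 p$ and $p\phi_1 = -\psi_2 q$. Using these, I would verify directly that $\theta := (f + \psi_2 q,\, g)$ is a morphism of matrix factorizations from $M_1$ to $M_2$; this plays the role analogous to the modification $t \mapsto t + \phi_2 \circ p$ in the faithfulness proof.

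It then remains to show that $T(\theta)$ is homotopic to $\overline{h}$ in $\Ktac(\proj R)$. The chain map $T(\theta)$ is $2$-periodic, with even-degree component $\overline{f} + \overline{\psi}_2 \overline{q}$ and odd-degree component $\overline{g}$. I would build the required chain homotopy $\overline{\sigma}$ degree by degree, starting with $\overline{\sigma}_0 := \overline{q}$ and $\overline{\sigma}_1 := \overline{p}$, which satisfy the homotopy equations in degrees $0$ and $1$ by virtue of the relations just derived. In positive degrees the induction step reduces to a lifting problem: a diagram chase shows that the relevant target lies in the image of a differential of $T(M_2)$, and the freeness (hence projectivity) of the source $R$-module supplies the lift. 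This step is routine.

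The hard part will be extending the homotopy in negative degrees, where the induction produces an \emph{extension} problem rather than a lifting problem: a well-defined map on a submodule of $T(M_1)_{n-1}$ must be extended to the ambient free $R$-module. The obstruction sits in $\Ext^1_R(M, T(M_2)_n)$, where $T(M_2)_n$ is a finitely generated free $R$-module and $M$ is an image of a differential in the totally acyclic complex $T(M_1)$. Total acyclicity makes the dualized projective resolution of $M$ acyclic, giving $\Ext^i_R(M, R) = 0$ for all $i \geq 1$, and hence $\Ext^1_R(M, T(M_2)_n) = 0$. This is precisely where working in $\Ktac$ rather than merely $\Kac$ is essential: over a non-Gorenstein factor ring $R$ an acyclic complex need not be totally acyclic, the $\Ext$ obstruction need not vanish, and the inductive construction would break down.
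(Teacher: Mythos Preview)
Your overall strategy matches the paper's: lift a representing chain map to $S$, correct the lifts to obtain a genuine morphism of matrix factorizations, and then show that its image agrees with the original chain map up to homotopy by extending a partial homotopy in one direction via projectivity and in the other via total acyclicity. Your $\Ext^1$-vanishing argument for the negative-degree extension is precisely the content of the paper's ``dualize, extend to the left, dualize back'' step, phrased more intrinsically.

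There is, however, one genuine slip in your setup. A chain map $\overline{h}\colon T(M_1)\to T(M_2)$ need not be $2$-periodic, so two \emph{consecutive} components $\overline{h}_0,\overline{h}_1$ satisfy only \emph{one} of the two relations $\overline{\phi}_2\,\overline{h}_0=\overline{h}_1\,\overline{\phi}_1$ and $\overline{\psi}_2\,\overline{h}_1=\overline{h}_0\,\overline{\psi}_1$; the other relation involves a third component (the map at the next $G$-spot on the other side of $\overline{h}_0$). With only two lifts $f,g$, the identities $q\psi_1=-\phi_2 p$ and $p\phi_1=-\psi_2 q$ that you derive are false in general: they pick up an extra term measuring the difference between the two distinct $G$-components of $\overline{h}$ flanking $\overline{h}_0$. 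This is exactly the non-periodicity issue flagged in the proof of Proposition~\ref{Prop:faithful}. The paper handles it by lifting \emph{three} consecutive components $g_1,f_0,g_0$ and defining the two defects $\alpha,\beta$ from the two genuine chain-map relations; the corrected morphism there is $(f_0-\psi_2\alpha+\beta\phi_1,\ g_0+\phi_2\beta)$. Your simpler correction $(f+\psi_2 q,\,g)$ can also be made to work once you use three lifts: take $g=g_0$, define $q$ by $g_0\phi_1-\phi_2 f=xq$ and $p$ by $f\psi_1-\psi_2 g_1=xp$; then $q\psi_1=g_0-g_1-\phi_2 p$, and a direct check shows $(f+\psi_2 q,\,g_0)$ is a morphism in $\MF(S,x)$. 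After this repair the homotopy construction proceeds exactly as you outline.
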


\begin{proof}
Let $(F_1,G_1,\phi_1,\psi_1)$ and $(F_2,G_2,\phi_2,\psi_2)$ be matrix factorizations in $\MF(S,x)$, and suppose that 
$$\xymatrix@C=30pt{
\cdots \ar[r] & F_1/xF_1 \ar[r]^{\overline{\phi}_1} \ar[d]^{\overline{f}_1} & G_1/xG_1 \ar[r]^{\overline{\psi}_1} \ar[d]^{\overline{g}_1} & F_1/xF_1 \ar[r]^{\overline{\phi}_1} \ar[d]^{\overline{f}_0} & G_1/xG_1 \ar[r] \ar[d]^{\overline{g}_{0}} & \cdots \\
\cdots \ar[r] & F_2/xF_2 \ar[r]^{\overline{\phi}_2} & G_2/xG_2 \ar[r]^{\overline{\psi}_2} & F_2/xF_2 \ar[r]^{\overline{\phi}_2} & G_2/xG_2 \ar[r] & \cdots}$$
is a chain map $\eta$ of totally acyclic complexes over $R$, representing a morphism $[ \eta ]$ in $\Ktac ( \proj R)$. When lifting a section to $S$, we obtain a diagram
$$\xymatrix@C=30pt@R=20pt{
G_1 \ar[r]^{\psi_1}\ar[d]^{g_1} & F_1 \ar[r]^{\phi_1}\ar[d]^{f_0} & G_1 \ar[d]^{g_{0}} \\
G_2\ar[r]^{\psi_2} & F_2 \ar[r]^{\phi_2} & G_2 }$$
where the vertical maps are chosen liftings. Now let $u$ be an element in $G_1$, and $a$ an element in $F_1$. Since the diagram commutes when we reduce modulo $x$, there exist elements $v_u \in F_2$ and $b_a \in G_2$ such that
\begin{eqnarray*}
\phi_2 \circ f_0(a) - g_{0} \circ \phi_1(a) & = & x \cdot b_a \\
\psi_2 \circ g_1(u) - f_0 \circ \psi_1(u) & = & x \cdot v_u,
\end{eqnarray*}
and these elements are unique since $x$ is a non-zerodivisor. It follows that the maps
\begin{eqnarray*}
\alpha \colon F_1 \to G_2, \hspace{5mm} a \mapsto b_a \\
\beta \colon G_1 \to F_2, \hspace{5mm} u \mapsto v_u
\end{eqnarray*}
are well defined $S$-homomorphisms, giving equalities 
\begin{eqnarray*}
\phi_2 \circ f_0 - g_{0} \circ \phi_1 & = & x \cdot \alpha \\
\psi_2 \circ g_1 - f_0 \circ \psi_1 & = & x \cdot \beta.
\end{eqnarray*}
The first equality gives
$$x \cdot \psi_2 \circ \alpha \circ \psi_1 = \psi_2 \circ \left ( \phi_2 \circ f_0 - g_{0} \circ \phi_1 \right ) \circ \psi_1 = x \cdot f_0 \circ \psi_1 - x \cdot \psi_2 \circ g_0,$$
and so
$$\psi_2 \circ \alpha \circ \psi_1 = f_0 \circ \psi_1 - \psi_2 \circ g_0$$
since $x$ is a non-zerodivisor.

Now consider the diagram
$$\xymatrix@C=30pt@R=20pt{
G_1 \ar[r]^{\psi_1}\ar[d]^{g} & F_1 \ar[r]^{\phi_1}\ar[d]^{f} & G_1 \ar[d]^{g} \\
G_2\ar[r]^{\psi_2} & F_2 \ar[r]^{\phi_2} & G_2 }$$
where the vertical maps are defined by
\begin{eqnarray*}
f & = & f_0 - \psi_2 \circ \alpha + \beta \circ \phi_1 \\
g & = & g_0 + \phi_2 \circ \beta.
\end{eqnarray*}
Using the equalities established above, we obtain 
\begin{eqnarray*}
\psi_2 \circ g & = & \psi_2 \circ g_0 + \psi_2 \circ \phi_2 \circ \beta \\
& = & \left ( f_0 \circ \psi_1 - \psi_2 \circ \alpha \circ \psi_1 \right ) + x \cdot \beta \\
& = & f_0 \circ \psi_1 - \psi_2 \circ \alpha \circ \psi_1 + \beta \circ \phi_1 \circ \psi_1 \\
& = & f \circ \psi_1 \\
\phi_2 \circ f & = & \phi_2 \circ f_0 - \phi_2 \circ \psi_2 \circ \alpha + \phi_2 \circ \beta \circ \phi_1 \\
& = & \left ( \phi_2 \circ f_0 - x \cdot \alpha \right ) + \phi_2 \circ \beta \circ \phi_1 \\
& = & g_0 \circ \phi_1 + \phi_2 \circ \beta \circ \phi_1 \\
& = & g \circ \phi_1,
\end{eqnarray*}
showing that the diagram commutes. Thus the pair $\theta = (f,g)$ is a morphism $\theta \colon (F_1,G_1,\phi_1,\psi_1) \to (F_2,G_2,\phi_2,\psi_2)$ of matrix factorizations in $\MF(S,x)$; we shall show that $T( [ \theta ]) = [ \eta ]$.

The morphism $T([ \theta ])$ in $\Ktac( \proj R)$ is represented by the two-periodic chain map
$$\xymatrix@C=30pt{
\cdots \ar[r] & F_1/xF_1 \ar[r]^{\overline{\phi}_1} \ar[d]^{\overline{f}} & G_1/xG_1 \ar[r]^{\overline{\psi}_1} \ar[d]^{\overline{g}} & F_1/xF_1 \ar[r]^{\overline{\phi}_1} \ar[d]^{\overline{f}} & G_1/xG_1 \ar[r] \ar[d]^{\overline{g}} & \cdots \\
\cdots \ar[r] & F_2/xF_2 \ar[r]^{\overline{\phi}_2} & G_2/xG_2 \ar[r]^{\overline{\psi}_2} & F_2/xF_2 \ar[r]^{\overline{\phi}_2} & G_2/xG_2 \ar[r] & \cdots}$$
of totally acyclic complexes.  We must show that this chain map is homotopic to the chain map $\eta$ we started with. Consider therefore the diagram
$$\xymatrix@C=30pt{
\cdots \ar[r] & F_1/xF_1 \ar[r]^{\overline{\phi}_1} \ar[d]^{\overline{f} - \overline{f}_1} & G_1/xG_1 \ar[r]^{\overline{\psi}_1} \ar[d]^{\overline{g} - \overline{g}_1} & F_1/xF_1 \ar[r]^{\overline{\phi}_1} \ar[d]^{\overline{f} - \overline{f}_0} \ar[dl]_{- \overline{\alpha}} & G_1/xG_1 \ar[r] \ar[d]^{\overline{g} - \overline{g}_0} \ar[dl]_{\overline{\beta}} & \cdots \\
\cdots \ar[r] & F_2/xF_2 \ar[r]^{\overline{\phi}_2} & G_2/xG_2 \ar[r]^{\overline{\psi}_2} & F_2/xF_2 \ar[r]^{\overline{\phi}_2} & G_2/xG_2 \ar[r] & \cdots}$$
of $R$-modules and maps. The very definition of the $S$-homomorphism $f$ gives
$$\overline{f} - \overline{f}_0 = - \overline{\psi}_2 \circ \overline{\alpha} + \overline{\beta} \circ \overline{\phi}_1,$$
hence the diagram displays the ``zeroth part'' of a possible nullhomotopy. Since the lower complex is acyclic and the upper complex consists of free $R$-modules, a standard argument allows us to complete the nullhomotopy to the left. Since the complexes are totally acyclic, they remain exact when we apply the functor $\Hom_R(-,R)$ to the diagram. As above, we may complete the nullhomotopy in this new diagram to the right, and when we apply $\Hom_R(-,R)$ once more, we obtain a complete nullhomotopy in the original diagram. This shows that $T( [ \theta ]) = [ \eta ]$ in $\Ktac ( \proj R)$, and so we have proved that the functor $T$ is full.
\end{proof}

Combining the previous results, we obtain the following.

\begin{theorem}\label{Thm:embedding}
Let $S$ be a commutative ring, $x \in S$ a non-zerodivisor, and denote by $R$ the factor ring $S/(x)$. To a matrix factorization $(F,G, \phi, \psi)$ in $\MF(S,x)$, assign the complex
$$\xymatrix@C=30pt{
\cdots \ar[r] & F/xF \ar[r]^{\overline{\phi}} & G/xG \ar[r]^{\overline{\psi}} & F/xF \ar[r]^{\overline{\phi}} & G/xG \ar[r] & \cdots}$$
of free $R$-modules, and assign to a morphism in $\MF(S,x)$ the obvious morphism of complexes. This assignment induces a triangle functor
$$T \colon \HMF(S,x) \to \Ktac( \proj R)$$
which is fully faithful.
\end{theorem}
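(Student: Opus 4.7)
The plan is essentially to assemble the pieces already established in the excerpt; there is no new mathematical content beyond citing the three preceding results. By Lemma \ref{Lem:functor}, the assignment $(F,G,\phi,\psi) \mapsto (\cdots \to F/xF \to G/xG \to F/xF \to \cdots)$ on objects together with the obvious assignment on morphisms is well defined on homotopy classes and defines a triangle functor $T \colon \HMF(S,x) \to \Ktac(\proj R)$. It remains only to observe that this functor is fully faithful.

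For faithfulness, I would simply invoke Proposition \ref{Prop:faithful}: if $T([\theta]) = 0$ in $\Ktac(\proj R)$, the argument there lifts a null-homotopy in the homotopy category of complexes to a null-homotopy in $\MF(S,x)$ by using that $x$ is a non-zerodivisor to define the correction maps $p$ and $q$, and then modifies $t$ to $t'$ so that $(s_2, t')$ is an honest null-homotopy of $\theta$; hence $[\theta] = 0$ in $\HMF(S,x)$.

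For fullness, I would invoke Proposition \ref{Prop:full}: given a chain map $\eta$ of totally acyclic complexes between the images of two matrix factorizations, a periodic lifting of the zeroth-section of $\eta$ combined with the correction maps $\alpha$ and $\beta$ (again defined using that $x$ is a non-zerodivisor) yields honest $S$-homomorphisms $f$ and $g$ that give a morphism $\theta$ in $\MF(S,x)$. The resulting chain map $T([\theta])$ then differs from $\eta$ by a map that is null-homotopic on the zeroth level with explicit homotopy pieces $-\overline{\alpha}$ and $\overline{\beta}$, and total acyclicity lets one extend the partial null-homotopy in both directions (to the left using acyclicity of the target together with projectivity of the terms of the source, and to the right by dualizing, extending, and dualizing back, which requires precisely that $\Hom_R(-,R)$ applied to both complexes remains exact).

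Thus $T$ is a triangle functor that is both faithful and full. Since there is no additional obstacle once Lemma \ref{Lem:functor}, Proposition \ref{Prop:faithful}, and Proposition \ref{Prop:full} are in place, the proof is just a one-line combination of these three statements. The real content of the theorem sits in the previous two propositions, whose main technical point in both cases is the same: the assumption that $x$ is a non-zerodivisor allows one to divide by $x$ unambiguously and so lift data from $R$ back to $S$ in a well defined way.
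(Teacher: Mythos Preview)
Your proposal is correct and matches the paper's approach exactly: the theorem is stated immediately after the phrase ``Combining the previous results, we obtain the following,'' with no further proof given, so the intended argument is precisely the one-line combination of Lemma~\ref{Lem:functor}, Proposition~\ref{Prop:faithful}, and Proposition~\ref{Prop:full} that you describe. Your summaries of how each piece works are accurate and even go a bit beyond what the paper writes out for this theorem.
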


Having established this general result, we now prove our main result for complete intersections. Thus let $(Q, \n, k)$ be a regular local ring and $\bm{t} = t_1, \dots, t_c$ a regular sequence contained in $\n^2$. Denote by $R$ the codimension $c$ complete intersection $Q/ ( \bm{t} )$, and by $V$ the $c$-dimensional $k$-vector space $(\bm{t}) / \n (\bm{t} )$. The result shows that the thick subcategories of $\sing(R)$ defined in terms of complements of $(c-1)$-dimensional subspaces of $V$ (cf.\ Lemma \ref{Lem:thick}) are equivalent to homotopy categories of matrix factorizations over complete intersections of codimension $c-1$. 

Recall first that if $\overline{x}$ is any nonzero element of $V$, then when completing to a basis $\overline{x}, \overline{x}_1, \dots, \overline{x}_{c-1}$ and lifting to $Q$, we obtain a regular sequence $x, x_1, \dots, x_{c-1}$. The image of $x$ is a non-zerodivisor in the codimension $c-1$ complete intersection $S = Q/(x_1, \dots, x_{c-1})$, and $R = S/(x)$.

\begin{theorem}\label{Thm:main}
Let $(Q, \n, k)$ be a regular local ring and $\bm{t} = t_1, \dots, t_c$ a regular sequence contained in $\n^2$. Denote by $R$ the complete intersection $Q/ ( \bm{t} )$, by $V$ the $c$-dimensional $k$-vector space $(\bm{t}) / \n (\bm{t} )$, and let $\overline{x}$ be a nonzero element in $V$. Complete this element to a basis $\overline{x}, \overline{x}_1, \dots, \overline{x}_{c-1}$, denote by $W$ the subspace of $V$ spanned by $\overline{x}_1, \dots, \overline{x}_{c-1}$, and by $S$ the codimension $c-1$ complete intersection $Q/(x_1, \dots, x_{c-1})$. Then the map
\begin{eqnarray*}
\HMF(S,x) & \to & \{ M \in \sing(R) \mid \V_R(M) \subseteq \{ 0 \} \cup V \setminus W \} \\
(F,G,\phi,\psi)  & \mapsto & \Coker \psi
\end{eqnarray*}
is an equivalence of triangulated categories, where we view $\Coker \psi$ as a stalk complex.
\end{theorem}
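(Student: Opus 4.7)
The plan is to factor the stated assignment as the composition of the fully faithful embedding from Theorem \ref{Thm:embedding} with the Buchweitz equivalences $\Kac(\proj R) \xrightarrow{\sim} \stMCM(R) \xrightarrow{\sim} \sing(R)$. Since $R$ is a complete intersection, hence Gorenstein, $\Ktac(\proj R) = \Kac(\proj R)$, so Theorem \ref{Thm:embedding} produces a fully faithful triangle functor $\HMF(S,x) \to \Kac(\proj R)$. A direct computation shows that the two-periodic complex $T(F,G,\phi,\psi)$ is sent under Buchweitz's functor to $\Coker \psi$ once the standard indexing convention is chosen; the short exact sequence $0 \to \Im \overline{\psi} \to F/xF \to \Coker \psi \to 0$ shows that any two such choices agree in $\sing(R)$ up to an invertible suspension. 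Fully faithfulness of the composite is then immediate.

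For the containment of the essential image in the claimed subcategory, I observe that $\psi$ is injective, since $\phi \circ \psi = x \cdot 1_G$ and $x$ is a non-zerodivisor in $S$. Hence $0 \to G \xrightarrow{\psi} F \to M \to 0$ is a length-one $S$-free resolution of $M := \Coker \psi$. To check $\V_R(M) \cap W = \{0\}$, I take a nonzero $\overline{y} \in W$ and complete it to a basis of $W$, so that $y = y_1, y_2, \dots, y_{c-1}$ is a regular sequence in $Q$ generating $(x_1, \dots, x_{c-1})$. Then $y_2, \dots, y_{c-1}$ remains a regular sequence modulo $y$, and its Koszul complex resolves $S$ over $Q/(y)$ in length $c-2$. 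Splicing this with the $S$-resolution of $M$ gives $\pd_{Q/(y)} M < \infty$, so $\overline{y} \notin \V_R(M)$.

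For essential surjectivity, a typical object is (via Buchweitz) a non-free maximal Cohen-Macaulay $R$-module $M$ with $\V_R(M) \cap W = \{0\}$. The crucial intermediate step is the converse of the previous paragraph: this support condition forces $\pd_S M < \infty$. Granting it, Auslander-Buchsbaum applied over $S$ (using $\depth_S M = \dim R$ and $\depth S = \dim R + 1$) yields $\pd_S M = 1$, so $M$ admits a two-term $S$-free resolution $0 \to G \xrightarrow{\psi} F \to M \to 0$. Because $x$ annihilates $M$, the endomorphism $x \cdot 1_F$ factors through $\psi$, giving $\phi \colon F \to G$ with $\psi \circ \phi = x \cdot 1_F$; injectivity of $\psi$ and the non-zerodivisor property of $x$ then force $\phi \circ \psi = x \cdot 1_G$. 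Hence $(F,G,\phi,\psi) \in \MF(S,x)$ maps to $M$.

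The main obstacle I expect is the converse implication $\V_R(M) \cap W = \{0\} \Longrightarrow \pd_S M < \infty$ used in essential surjectivity: the forward direction is the elementary change-of-rings argument above, but the reverse is the substantive input tying the module-theoretic support variety of Bergh-Jorgensen to finite projective dimension over the codimension $c-1$ intermediate complete intersection $S$. I would attempt to reduce this to the identification of the Bergh-Jorgensen variety with the Avramov-Buchweitz cohomological support variety, where the corresponding statement -- that $k[\chi_1,\dots,\chi_{c-1}]$ acts nilpotently on $\Ext_R^*(M,k)$ precisely when $M$ has finite projective dimension over $S$ -- is standard.
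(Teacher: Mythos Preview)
Your proposal is correct and follows the same architecture as the paper's proof: use Theorem~\ref{Thm:embedding} for fully faithfulness, identify the essential image via the support variety condition, and invoke the equivalence of the Bergh--Jorgensen variety with the Avramov--Buchweitz cohomological variety for the hard converse step $\V_R(M)\cap W=\{0\}\Rightarrow\pd_S M<\infty$. The differences are only at the leaves. For the containment $\V_R(\Coker\psi)\subseteq\{0\}\cup V\setminus W$, you argue directly by splicing the length-one $S$-resolution with the Koszul resolution of $S$ over $Q/(y)$, whereas the paper cites \cite[Proposition~3.4]{BerghJorgensen} for the same conclusion. For essential surjectivity, once $\pd_S M<\infty$ is known, you run Auslander--Buchsbaum to get $\pd_S M=1$ and build the matrix factorization by hand in Eisenbud's original style; the paper instead invokes \cite[Theorem~4.4]{Avramov} to produce a matrix factorization resolving $\Omega_R^1(X_M)$ and then matches minimal totally acyclic complexes. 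Your route is slightly more self-contained; the paper's is shorter because it outsources these standard steps.
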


\begin{proof}
If $c=1$, then $S=Q$, $W=0$, and $R= Q/(x)$. In this case, the statement says that taking cokernels induces a triangle equivalence between $\HMF(Q,x)$ and $\sing(R)$. This result was noted by Buchweitz in \cite{Buchweitz}, and proved explicitly in \cite[Theorem 3.9]{Orlov1}. We may therefore assume that $c$ is at least $2$. Moreover, since $R$ is Gorenstein, we may replace the category $\sing (R)$ in the statement by $\Kac ( \proj R ) = \Ktac ( \proj R )$.

Consider the fully faithful triangle functor
$$T \colon \HMF(S,x) \to \Kac( \proj R)$$
from Theorem \ref{Thm:embedding}, and let $(F,G,\phi,\psi)$ be a matrix factorization in $\MF(S,x)$. The image of this matrix factorization under $T$ is the acyclic complex
$$\xymatrix@C=25pt{
M \colon & \cdots \ar[r] & F/xF \ar[r]^{\overline{\phi}} & G/xG \ar[r]^{\overline{\psi}} & F/xF \ar[r]^{\overline{\phi}} & G/xG \ar[r] & \cdots}$$
in $\Kac( \proj R)$. Since $\psi \circ \phi = x \cdot 1_F$, we see that $x$ annihilates $\Coker \psi$, making it an $R$-module; we denote this module by $X_M$. This module is the zeroth differential in $M$, since
\begin{eqnarray*}
\Coker \overline{\psi} & = & \left ( F/xF \right ) / \Im \overline{\psi} \\
& = &  \left ( F/xF \right ) / \left ( [ \Im \psi + xF ] / xF \right ) \\
& = & \left ( F/xF \right ) / \left ( [ \Im \psi + \psi \circ \phi(F) ] / xF \right ) \\
& = & \left ( F/xF \right ) / \left ( \Im \psi / xF \right ) \\
& \simeq & F / \Im \psi.
\end{eqnarray*}
By definition, the support variety of the complex $M$ equals that of the $R$-module $X_M$, i.e.\ $\V_R(M) = \V_R(X_M)$.

The compositions of the two maps in a matrix factorisation of a non-zerodivisor are injective, hence so are the maps themselves. The sequence
$$0 \to G \xrightarrow{\psi} F \to X_M \to 0$$
of $S$-modules is therefore exact, and so $\pd_S X_M$ is finite. Now suppose that $\overline{y}$ is a nonzero element in the subspace $W$ of $V$ (note that $W$ is nonzero since $c \ge 2$), and complete to a basis $\overline{y}, \overline{y}_1, \dots, \overline{y}_{c-2}$ of $W$ (if $c \ge 3$). In the terminology of \cite{BerghJorgensen}, the complete intersections $S= Q/(x_1, \dots, x_{c-1})$ and $Q/(y, y_1, \dots, y_{c-2})$ are equivalent, and it follows therefore from \cite[Proposition 3.4]{BerghJorgensen} that the projective dimension of $X_M$ as a module over the latter is finite. Then $\pd_{Q/(y)}X_M$ is also finite, since, if $c \ge 3$, the ring $Q/(y, y_1, \dots, y_{c-2})$ is a quotient of $Q/(y)$ by a regular sequence. This shows that $\overline{y}$ is not an element in $\V_R(X_M)$, and consequently $\V_R(X_M) \subseteq \{ 0 \} \cup V \setminus W$.

Conversely, let $M$ be an indecomposable object in $\Kac ( \proj R)$ with $\V_R(M) \subseteq  \{ 0 \} \cup V \setminus W$. We may assume that $M$ is a minimal acyclic complex; if not, then it is isomorphic in $\Kac ( \proj R)$ to such a complex. Consider the image $X_M$ of the zeroth differential in $M$. This is a maximal Cohen-Macaulay $R$-module, and by definition $\V_R(M)$ equals $\V_R(X_M)$. Since $W \cap \V_R(X_M) = \{ 0 \}$, it follows from \cite[Theorem 3.1]{BerghJorgensen} that $\V_S(X_M)$ is trivial, and then in turn from \cite[Theorem 2.5 and Theorem 5.6]{AvramovBuchweitz} that $X_M$ has finite projective dimension over $S$.

\sloppy By \cite[Theorem 4.4]{Avramov}, there exists a matrix factorization $(F,G,\phi,\psi)$ in $\MF(S,x)$ with the property that 
$$\xymatrix@C=25pt{
\cdots \ar[r] & F/xF \ar[r]^{\overline{\phi}} & G/xG \ar[r]^{\overline{\psi}} & F/xF \ar[r]^{\overline{\phi}} & G/xG \ar[r] & \Omega_R^1(X_M) \ar[r] & 0 }$$
is a minimal free resolution of $\Omega_R^1(X_M)$. Thus this module is the image of the first differential in the acyclic complex $T(F,G,\phi,\psi)$, which is minimal. Consequently, when we truncate the complexes $M$ and $T(F,G,\phi,\psi)$ at degree one, they are isomorphic, because the truncation of $M$ is also a minimal free resolution of $\Omega_R^1(X_M)$. Being totally acyclic, this implies that $M$ and $T(F,G,\phi,\psi)$ themselves are isomorphic.
\end{proof}

\begin{corollary}\label{Cor:equivalences1}
Let $(Q, \n, k)$ be a regular local ring and $\bm{t} = t_1, \dots, t_c$ a regular sequence contained in $\n^2$. Denote by $V$ the $c$-dimensional $k$-vector space $(\bm{t}) / \n (\bm{t} )$, and let $\overline{x}$ be a nonzero element in $V$. Furthermore, let $\overline{x}_1, \dots, \overline{x}_{c-1}$ and $\overline{y}_1, \dots, \overline{y}_{c-1}$ be two completions of $\overline{x}$ to bases of $V$, generating the same $(c-1)$-dimensional subspace of $V$. Finally, consider the codimension $c-1$ complete intersections $S = Q/(x_1, \dots, x_{c-1})$ and $S'=Q/(y_1, \dots, y_{c-1})$. Then the triangulated categories $\HMF(S,x)$ and $\HMF(S',x)$ are equivalent. 
\end{corollary}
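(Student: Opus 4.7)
The plan is to apply Theorem \ref{Thm:main} twice---once to each completion of $\overline{x}$---and observe that the two applications land in exactly the same triangulated subcategory of $\sing(R)$, yielding the desired equivalence by composition.

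First I would verify that the hypotheses of Theorem \ref{Thm:main} are in place for both completions. Since $\overline{x}, \overline{x}_1, \dots, \overline{x}_{c-1}$ is a basis of $V$, its lift $x, x_1, \dots, x_{c-1}$ is a regular sequence in $Q$ generating the ideal $(\bm{t})$; hence $x$ is a non-zerodivisor in $S = Q/(x_1, \dots, x_{c-1})$ and $R = Q/(\bm{t}) = S/(x)$. The same argument applies to the completion $\overline{x}, \overline{y}_1, \dots, \overline{y}_{c-1}$, giving $R = S'/(x)$ with $x$ a non-zerodivisor in $S'$. Thus both $\HMF(S,x)$ and $\HMF(S',x)$ are valid source categories for Theorem \ref{Thm:main}, applied with the same ambient data $Q$, $\bm{t}$, $V$ and $\overline{x}$.

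Next I would invoke Theorem \ref{Thm:main} for each completion. Let $W$ be the $(c-1)$-dimensional subspace of $V$ spanned by $\overline{x}_1, \dots, \overline{x}_{c-1}$; by hypothesis this is the very same subspace spanned by $\overline{y}_1, \dots, \overline{y}_{c-1}$. The two applications of the theorem therefore produce triangle equivalences
$$\HMF(S,x) \xrightarrow{\ \sim\ } \{ M \in \sing(R) \mid \V_R(M) \subseteq \{0\} \cup V \setminus W \} \xleftarrow{\ \sim\ } \HMF(S',x),$$
both going into the \emph{same} thick subcategory of $\sing(R)$. Composing the first equivalence with a quasi-inverse of the second yields the desired triangle equivalence $\HMF(S,x) \simeq \HMF(S',x)$.

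There is no substantive obstacle: the entire content is packaged inside Theorem \ref{Thm:main}, whose target category depends on the chosen completion only through the subspace $W \subseteq V$. The only point that has to be registered is that both $S$ and $S'$ arise from genuine completions of $\overline{x}$ to a basis of $V$, which is exactly what guarantees that $x$ remains a non-zerodivisor modulo each of the two codimension $c-1$ complete intersections and that both applications of Theorem \ref{Thm:main} are legal.
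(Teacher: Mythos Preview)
Your proposal is correct and follows exactly the paper's own argument: apply Theorem~\ref{Thm:main} to each completion, note that the target subcategory $\{ M \in \sing(R) \mid \V_R(M) \subseteq \{0\} \cup V \setminus W \}$ depends only on the common subspace $W$, and compose. The extra verification that the hypotheses of Theorem~\ref{Thm:main} hold for both completions is fine but not strictly needed, since this is already built into the setup preceding that theorem.
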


\begin{proof}
Denote by $W$ the $(c-1)$-dimensional subspace of $V$ generated by $\overline{x}_1, \dots, \overline{x}_{c-1}$. By the theorem, both categories are equivalent to the category $\{ M \in \sing(R) \mid \V_R(M) \subseteq \{ 0 \} \cup V \setminus W \}$.
\end{proof}

\begin{example}
Consider the power series ring $Q= \mathbb C \llbracket x,y \rrbracket$. Take the regular sequence 
$\bm{t} =x^2,y^2$, and consider the $\mathbb C$-vector space $V= (\bm{t}) / \n (\bm{t} )$, where 
$\n = (x,y)$. Then the images in $V$ of the elements $y^2$ and $y^2 - x^3$ of $Q$ both complete the element $x^2 +  \n (\bm{t} )$ to a basis for $V$, and these two elements generate the same subspace of $V$. Hence the triangulated categories $\HMF(Q/(y^2),x^2)$ and $\HMF(Q/(y^2-x^3),x^2)$ are equivalent. Note that the hypersurfaces $Q/(y^2)$ and $Q/(y^2-x^3)$ are not isomorphic: one of them is an integral domain, the other is not.
\end{example}

\end{document}